\documentclass[12pt, A4,reqno]{amsart}
\usepackage{amsmath}
\usepackage{amssymb}
\usepackage{amsfonts,color}
\numberwithin{equation}{section}
\usepackage{a4wide}
\usepackage{graphicx}
\usepackage{comment}
\usepackage{accents}

\newcommand{\dbtilde}[1]{\accentset{\approx}{#1}}

\newcommand{\CC}{\mathbb{C}}
\newcommand{\DD}{\mathbb{D}}
\newcommand{\NN}{\mathbb{N}}
\newcommand{\RR}{\mathbb{R}}

\newcommand{\ZZ}{\mathbb{Z}}

\renewcommand{\tilde}{\widetilde}

\renewcommand{\Im}{\operatorname{Im}}

\newcommand{\bea}{\begin{eqnarray}}
\newcommand{\eea}{\end{eqnarray}}

\newcommand{\beqa}{\begin{eqnarray*}}
\newcommand{\eeqa}{\end{eqnarray*}}
\newcommand{\D}{\displaystyle}

\newcommand{\lra}{\longrightarrow}
\newcommand{\eps}{\varepsilon}

\DeclareMathOperator{\Hol}{Hol}

\DeclareMathSymbol{\subsetneqq}{\mathbin}{AMSb}{36}

\DeclareMathOperator{\Pol}{Pol}
\DeclareMathOperator{\Area}{Area}

\newtheorem{thm}{Theorem}
\newtheorem{theorem}{Theorem}
\newtheorem{lem}[theorem]{{\bf Lemma}}
\newtheorem{coro}{{\bf Corollary}}
\newtheorem{prop}{{Proposition}}
\newtheorem{remark}{{Remark}}
\newtheorem{example}{{Example}}

\title{Dominating sets in Bergman spaces and sampling constants}

\author{A. Hartmann, D. Kamissoko, S. Konate \& M.-A. Orsoni}

\subjclass[2010]{30H20}
\keywords{Dominating sets, reverse Carleson measure, Bergman space, sampling}
\thanks{The research of the first and the last author is partially supported by the project ANR-18-CE40-0035 and by the Joint French-Russian Research Project PRC CNRS/RFBR 2017--2019.  The research of the third author is partially supported by Banque Mondiale via the Projet d'Appui
au D\'eveloppement de l'Enseignement Sup\'erieur du Mali.}

\address{Univ. Bordeaux, CNRS, Bordeaux INP, IMB, UMR 5251,  F-33400, Talence, France}  
\email{Andreas.Hartmann@math.u-bordeaux1.fr}
\email{Marcu-Antone.Orsoni@math.u-bordeaux1.fr}

\address{University of Segou}
\email{gnatiosia@gmail.com}
\email{dan.kamiss@yahoo.fr}

\begin{document}
\begin{abstract}
We discuss sampling constants for dominating sets in Bergman spaces. 
Our method is based on a Remez-type
inequality by Andrievskii and Ruscheweyh. 
We also comment on extensions of the method to other spaces such as Fock and Paley-Wiener
spaces.
\end{abstract}
\maketitle

\section{Introduction}

Let $A^{p,\alpha}(\DD)$, $1\le p<\infty$, $\alpha>-1$ 
be the  weighted Bergman space on the unit disk $\DD$ in 
the complex plane defined by
\bea\label{Samp1}
 A^{p,\alpha}(\DD)=\{f\in \Hol(\DD):\|f\|_p^p=(\alpha+1)\int_{\DD}|f(z)|^p(1-|z|^2)^{\alpha}
dA(z)<+\infty\}.
\eea
Here $dA$ denotes normalized Lebesgue area measure on $\DD$. We will also use the
notation $dA_{\alpha}(z)=(\alpha+1)(1-|z|^2)^{\alpha}dA(z)$ (see \cite{HKZ} for more
information on Bergman spaces). The unweighted Bergman space is denoted by $A^p=A^{p,0}$.
In this paper we are interested 
in  measurable 
sets $E\subset \DD$ for which we have
\bea\label{dom}
 \int_E|f(z)|^pdA_{\alpha}(z)\ge C^p \int_{\DD}|f(z)|^p dA_{\alpha}(z).
\eea
We will occasionally write $L^{p,\alpha}(F)$ for the Lebesque space on a measurable set $F
\subset \DD$ with respect to $dA_{\alpha}$.
A set satisfying \eqref{dom} will be called {\it dominating} for $A^{p,\alpha}$. We mention that 
dominating sets  are closely related to so-called reverse Carleson measures for which we refer 
to the survey \cite{FHR} for more information.
The question of dominating sets in the Bergman space has been studied first by Luecking 
\cite{Lu1}-\cite{Lu3} who completely 
characterized these dominating sets in $A^p$ in terms of {\it relative density}, which 
morally speaking means that every pseudohyperbolic disk of a certain minimal fixed radius (depending
on $E$) meets the set $E$ with uniformly positive density (precise definitions will be given below).
We mention the closely related 
question of sampling sequences which have been characterized by Seip (see the monograph 
\cite{S} for a general reference on sampling and interpolation). 

A question which arises quite naturally is whether it is possible to estimate the constant $C$ 
appearing in \eqref{Samp1} in terms of the underlying density. A prominent example where 
this question has been studied is the
Paley-Wiener space. Dominating sets in this space go back to work by Logvinenko-Sereda \cite{LS}
and Panejah \cite{Pa1}-\cite{Pa2}. We mention also some recent work on so-called
Paley-Wiener measures (see e.g. \cite[Theorem 15]{Po}).
Having precise estimates on the 
sampling constants is important in applications when one has to decide on the trade-off between
the cost of the sampling and the accuracy of the estimates. In the early 2000's 
Kovrijkine \cite{Ko} considered the Paley-Wiener space and gave a precise and optimal 
estimate on $C$ in terms of the underlying density. His method is quite clever
and based on the Bernstein inequality which does not hold in Bergman or Fock spaces
but for instance in model spaces where the proof was adapted in \cite{HJK}. It was brought to our attention that very recently, 
Jaming and Speckbacher \cite{JS} considered
the case of polyanalytic Fock spaces using a strong result by Brudnyi --- which holds for
plurisubharmonic functions --- and a clever
trick allowing to switch from polyanalytic functions to analytic ones in 2 variables. Since the
Fock space is a special case of the polyanalytic Fock space, they in particular
get an estimate of the sampling constant in Fock spaces.
Our method 
is more elementary and uses  holomorphy. While our result is presented for the Bergman space,
for which the geometry is different, the method applies quite directly to consider the case of
the (analytic) Fock space. 
It should also be noted that our intermediate step (Proposition \ref{prop1}) allows to turn
around the Bernstein inequality in a large set of situations, including of course Fock spaces
but also the Paley-Wiener space.
We will comment on these observations in the last section. 
\\

We need to introduce some notation.
Let 
\[
 \rho(z,w)=\left|\frac{z-w}{1-\overline{z}w}\right|
\]
be the pseudohyperbolic distance between two points $z,w\in\DD$. We consider the
associated pseudohyperbolic balls: $D_{phb}(z,r)=\{w\in \DD:\rho(z,w)<r\}$, where 
$0<r<1$. A measurable set $E\subset\DD$ is called $(\gamma,r_0)$-dense for some
$\gamma>0$ and $0<r_0<1$, if for every $z\in\DD$
\[
 \frac{|E\cap D_{phb}(z,r_0)|}{|D_{phb}(z,r_0)|}\ge \gamma.
\]
Here $|F|$ denotes planar Lebesgue measure of a measurable set $F$.
We will just say that the set is relatively dense if there is some $\gamma>0$ and some
$0<r<1$ such that the set is $(\gamma,r)$-dense.\\

Luecking's result on dominating sets can be stated as follows (\cite{Lu1}, and in particular condition
(2') in \cite[p.4]{Lu1}). 

\begin{thm}[Luecking]\label{L-early}
A (Lebesgue) measurable set $E$ is dominating in $A^p$ ($p>0$) if and only if 
it is relatively dense.
\end{thm}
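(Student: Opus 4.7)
The theorem is an equivalence, so I would treat the two directions separately.

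\emph{Necessity.} My plan is to argue by contraposition: if $E$ is not relatively dense, I construct a sequence of test functions that violates \eqref{dom}. Failure of relative density means that for any $\gamma>0$ and any $r\in(0,1)$, one can find a point $z\in\DD$ with $|E\cap D_{phb}(z,r)|/|D_{phb}(z,r)|<\gamma$. As test functions I would use the M\"obius-invariant family
$$f_z(w)=\left(\frac{1-|z|^2}{(1-\overline{z}w)^{2}}\right)^{(2+\alpha)/p},$$
whose $A^{p,\alpha}$-norm is bounded independently of $z$ and whose modulus to the $p$-th power coincides with the Jacobian weight of the disc automorphism $\varphi_z(w)=(z-w)/(1-\overline{z}w)$. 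A change of variable then yields
$$\frac{\int_E|f_z|^p\,dA_\alpha}{\int_\DD|f_z|^p\,dA_\alpha}=\frac{A_\alpha(\varphi_z(E))}{A_\alpha(\DD)},$$
where $\varphi_z(E)\cap r\DD$ is the image of $E\cap D_{phb}(z,r)$. Choosing $r$ close to $1$ to make the outer mass $A_\alpha(\DD\setminus r\DD)$ small, and then choosing $z$ so that $E$ is very sparse in $D_{phb}(z,r)$ (which forces $\varphi_z(E)\cap r\DD$ to have small $A_\alpha$-measure, since $(1-|w|^2)^\alpha$ is essentially constant on a fixed pseudohyperbolic disc), the ratio can be made arbitrarily small, contradicting \eqref{dom}.

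\emph{Sufficiency.} Fix $(\gamma,r_0)$ witnessing the density of $E$ and select a maximal $r_0/2$-separated sequence $\{z_k\}$ in the pseudohyperbolic metric, so that the discs $D_k:=D_{phb}(z_k,r_0)$ cover $\DD$ with bounded overlap while the smaller discs $D_{phb}(z_k,r_0/2)$ are disjoint. The target is the local estimate
$$\int_{D_k}|f|^p\,dA_\alpha\le C(\gamma,r_0)\int_{E\cap D_k}|f|^p\,dA_\alpha,\qquad f\in\Hol(\DD).$$
Summing over $k$ and using the bounded overlap to compare the resulting sums with $\int_\DD|f|^p\,dA_\alpha$ and $\int_E|f|^p\,dA_\alpha$ respectively yields \eqref{dom}. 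By M\"obius invariance of $\rho$ together with the fact that both the weight $(1-|z|^2)^\alpha$ and the Jacobian of $\varphi_{z_k}$ are essentially constant on $D_k$, the local estimate reduces to a single Euclidean claim: if $F\subset r_0\DD$ has relative planar area at least a multiple of $\gamma$, then
$$\int_{r_0\DD}|g|^p\,dA\le C(\gamma,r_0)\int_F|g|^p\,dA$$
for every $g\in\Hol(\DD)$.

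\emph{Main obstacle.} The heart of the matter is this last Euclidean claim: bounding the $L^p$-norm of a holomorphic function on a disc by its $L^p$-norm on a subset of positive relative measure. This is exactly a Remez-type inequality for holomorphic functions, and producing a quantitative version with good dependence on $\gamma$ is the main technical point; it is the role played in this paper by the Andrievskii--Ruscheweyh inequality advertised in the abstract. Everything else in the argument is bookkeeping centered on the pseudohyperbolic covering and on M\"obius invariance.
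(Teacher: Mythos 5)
The paper does not actually supply a proof of Theorem~\ref{L-early}; it is cited from Luecking \cite{Lu1}. So the useful comparison is against the known structure of such arguments and against the machinery the paper builds for the quantitative version (Theorem~\ref{thm1}).

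Your necessity argument is fine: it is the standard test on normalized reproducing kernels, and after the M\"obius change of variables the ratio of integrals does reduce to $A_\alpha(\varphi_z(E))$, which is forced small by combining the tail estimate $A_\alpha(\DD\setminus D(0,r))=(1-r^2)^{\alpha+1}$ with the Jacobian comparison on the fixed disc $D(0,r)$. This matches what the paper refers to as Luecking's necessity argument via reproducing kernels.

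The sufficiency direction, however, contains a genuine gap. Your ``single Euclidean claim''
\[
\int_{D(0,r_0)}|g|^p\,dA\le C(\gamma,r_0)\int_F|g|^p\,dA\qquad\text{for every }g\in\Hol(\DD),\ |F|\ge c\gamma|D(0,r_0)|,
\]
is false as stated, and therefore cannot serve as the reduction target. Take $g_n(z)=z^n$ and $F=D(0,r_0/2)$: then $|F|=\tfrac14|D(0,r_0)|$ but
\[
\frac{\int_{D(0,r_0)}|z^n|^p\,dA}{\int_F|z^n|^p\,dA}=2^{np+2}\longrightarrow\infty.
\]
Nothing in the $(\gamma,r_0)$-density hypothesis prevents $E\cap D_{phb}(z_k,r_0)$ from being concentrated in the inner half of the disc while $f$ is concentrated in the outer annulus, so the pointwise local estimate $\int_{D_k}|f|^p\,dA_\alpha\le C\int_{E\cap D_k}|f|^p\,dA_\alpha$ cannot hold for all $f$ and all $k$ with a uniform constant. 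The missing ingredient is precisely control of $M=\sup_{|z|\le\rho}|\phi|$ relative to the local mass; any Remez-type inequality of Andrievskii--Ruscheweyh or Kovrijkine type gives a bound whose \emph{exponent} grows like $\ln M$, so $M$ must be tamed before the inequality is useful. This is what the good/bad disc decomposition achieves: either Luecking's original normal-families argument (valid only under the normalization $\int_{D_k^{t}}|f|^p\lesssim\int_{D_k^{s}}|f|^p$) or Proposition~\ref{prop1} in this paper, which shows that the good discs already carry a fixed fraction of $\|f\|^p$. That step is not ``bookkeeping''; it is the second of the three main ingredients the introduction identifies, and is exactly the mechanism by which the paper avoids Bernstein's inequality. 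Your proposal should replace the universal Euclidean claim by its correct conditional form (hypothesis $|\phi(z_0)|\ge1$ and $M$ bounded, exponent $\sim\ln M$, as in Lemma~\ref{Kovr-2D}), and then add the argument that the set of indices $k$ where such an $M$ is available carries most of the mass.
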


Looking closely at the proof proposed by Luecking --- who was not interested in the magnitude
of the sampling constant --- it turns out that his sampling constant behaves like
$C^p\ge \gamma e^{-c_p/\gamma}$. A more general result concerning sampling measures in a large class of spaces of analytic funtions has been discussed again by Luecking in \cite{Lu3}, and in particular in Bergman spaces \cite[Theorem 1]{Lu3}. Even if the density appears explicitely in his proof, a compactness argument used in \cite[Lemma 4]{Lu3} introduces an implicit dependence on the density.

Our main result is the following

\begin{thm}\label{thm1}
Let $1\le p<+\infty$. There exists $L$ such that for every measurable set 
$E\subset \DD$ 
which is $(\gamma,r)$-dense, we have
\bea\label{estim1}
  \|f\|_{L^{p,\alpha}(E)}\ge \left(\frac{\gamma}{c}\right)^L \|f\|_{A^{p,\alpha}}
\eea
for every $f\in A^p$. 
\end{thm}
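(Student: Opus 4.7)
The plan is to adapt Kovrijkine's good/bad disk dichotomy to the Bergman setting, using the Andrievskii--Ruscheweyh Remez-type inequality (the intermediate Proposition~\ref{prop1}) in place of the Bernstein inequality that underlies Kovrijkine's Paley--Wiener argument. To begin, I would cover $\DD$ by pseudohyperbolic disks $D_k=D_{phb}(z_k,r_1)$ of a common radius $r_1$ comparable to $r$, chosen with bounded overlap, so that
\[
  \|f\|_{A^{p,\alpha}}^p \asymp \sum_k \int_{D_k}|f(z)|^p\, dA_{\alpha}(z).
\]
On each such disk the weight $(1-|z|^2)^\alpha$ is essentially constant, and the $(\gamma,r)$-density hypothesis gives $|E\cap D_k|\ge \gamma|D_k|$ for every $k$.

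I would then split the family into good and bad disks. A disk $D_k$ is declared good if, on a slightly dilated disk $D_k^{*}$,
\[
  \sup_{D_k}|f|^p \le \frac{K}{|D_k^{*}|}\int_{D_k^{*}}|f(z)|^p\, dA_{\alpha}(z),
\]
for a constant $K=K(p,\alpha)$ chosen large enough. A Chebyshev-type argument combined with the bounded overlap of the cover shows that the bad disks contribute at most half of $\|f\|_{A^{p,\alpha}}^p$, so the good disks collectively capture at least half the norm.

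On each good disk the Andrievskii--Ruscheweyh inequality applied to $f$ on $D_k$ produces $\sup_{D_k}|f|\le (c/\gamma)^{L_0}\sup_{E\cap D_k}|f|$. Locating a near-maximizer $z_0\in E\cap D_k$ and using the pseudohyperbolic oscillation estimate for holomorphic functions (a consequence of the sub-mean value property of $|f|^p$ together with Cauchy-type derivative bounds) to produce a small pseudohyperbolic neighborhood of $z_0$ on which $|f|^p$ remains comparable to $\sup_{D_k}|f|^p$, and using the $(\gamma,r)$-density hypothesis again at that smaller scale, I would obtain an estimate of the form
\[
  \int_{D_k}|f(z)|^p\, dA_{\alpha}(z) \le \left(\frac{c}{\gamma}\right)^{L}\int_{E\cap D_k}|f(z)|^p\, dA_{\alpha}(z).
\]
Summing over the good disks and invoking the previous step closes the argument.

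The hard part is to keep the dependence on $\gamma$ of power type at every local step: the Remez inequality introduces one power, the conversion from $\sup_{E\cap D_k}|f|$ to $\int_{E\cap D_k}|f|^p\, dA_{\alpha}$ introduces another, and these must be combined into a single exponent $L$ without resorting to a compactness argument, which, as pointed out in the introduction for \cite{Lu3}, would destroy the explicit dependence on $\gamma$. Fixing $K$ and balancing the constants in the Chebyshev step so that the bad disks do not dominate is the other main technical issue.
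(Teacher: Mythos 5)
Your overall strategy matches the paper's: cover $\DD$ by pseudohyperbolic disks with bounded overlap, isolate a family of ``good'' disks carrying a fixed share of $\|f\|_{A^{p,\alpha}}^p$ via a Chebyshev-type argument (this is Proposition~\ref{prop1}), then apply a Remez-type estimate locally on each good disk and sum. But two of the local steps have genuine gaps.

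First, your good-disk condition, $\sup_{D_k}|f|^p\le \tfrac{K}{|D_k^*|}\int_{D_k^*}|f|^p\,dA_\alpha$ with $D_k^*$ a dilate of $D_k$, is vacuous: by the sub-mean value property of $|f|^p$ this holds for \emph{every} disk with a constant depending only on the radii and $\alpha$, so it imposes no constraint on $f$ and, crucially, does not bound the quantity $M$ that enters the exponent $\eta\ln M$ of Lemma~\ref{Kovr-2D}. What must be bounded is the supremum of the normalized local function $h$ on an \emph{enlarged} disk, after arranging $|h(z_0)|\ge 1$ at some point of the small disk; this requires that the $L^p$ mass not inflate from the small disk to the large one. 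The paper's definition, good $=\{(n,k):\|f\|_{L^{p,\alpha}(D^t_{n,k})}\le K\|f\|_{L^{p,\alpha}(D^s_{n,k})}\}$ with $s<t$, is the nontrivial condition that achieves this, giving $M\lesssim K/(1-r_1^2)^{2(2+\alpha)/p}$ as in~\eqref{estimM0}. You need to flip the roles of small and large disk, or better, use the paper's norm comparison directly; as written, there are no bad disks and nothing is gained.

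Second, your conversion from the sup-Remez estimate to the local $L^p$ estimate does not close. You want to locate a near-maximizer $z_0\in E\cap D_k$, find a neighborhood of $z_0$ on which $|f|^p$ stays comparable, and then ``use the $(\gamma,r)$-density hypothesis at that smaller scale.'' But a Cauchy estimate puts that neighborhood's radius at roughly $(1-r_1)\cdot|f(z_0)|/\sup_{D_k^*}|f|$, which the Remez bound only allows you to estimate from below by about $(1-r_1)(\gamma/c)^{L_0}$; and a $(\gamma,r)$-dense set may completely miss disks of radius much smaller than $r$, so the hypothesis gives no density lower bound at that unknown smaller scale. The paper sidesteps this entirely via the $L^p$ version of the Remez/Kovrijkine lemma, Corollary~\ref{CoroKov}, which passes directly from $\|\phi\|_{L^p(D(0,r))}$ to $(cr^2/|E|)^{\eta\ln M+1/p}\|\phi\|_{L^p(E)}$ through Kovrijkine's distribution-function argument (applied there to the translated, normalized $h=T_{z_{n,k}}f\cdot(\pi r_1^2/\int_{D_0}|T_{z_{n,k}}f|^p\,dA)^{1/p}$), with no pointwise oscillation step and no appeal to density at a finer scale.
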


The constants $c$ and $L$ depend on $r$.
For $L$ we can choose
\[
 L=c_1\frac{1+\alpha}{p}\frac{1}{(1-r)^4}\ln\frac{1}{1-r},
\]
where $c_1$ is some universal constant.
\\

It should be noted that there is a competing relation between $\gamma$ and $r$. The
density $\gamma$ can be very small for a given $r$ (because $E$ has holes which  have
pseudohyperbolic radius close to $r$), but it can become rather big when we choose a bigger
radius (e.g.\ pseudohyperbolic doubling of $r$: $2r/(1+r^2)$). 
In a sense one needs to optimize $L\ln \gamma$, and $L$ depends on $r$. 
\\

Here is another observation.
Though this might be obvious, it should be observed that there is no reason 
a priori why a holomorphic function for which the integral $\int_E|f|^pdA_{\alpha}$ 
is bounded  for a  relative dense set $E$ should be in $A^{p,\alpha}$. 
Outside the class $A^{p,\alpha}$ relative density is 
in general not necessary for domination (see also a remark in  \cite[p.11]{Lu1}).
\\

The discussion of the necessary condition of relative density in \cite[p.5]{Lu1}, involving 
testing on reproducing kernels, shows that
given a sampling constant $C$ in \eqref{dom}, then $\gamma\gtrsim C^{p}$, i.e.
$C\lesssim \gamma^{1/p}$.
\\

There are three main ingredients in the proof of Theorem \ref{thm1}. 
The first one is a Remez-type inequality, which allows to get
the local estimate depending on the density. These have been studied in a large set of situations.
We refer to \cite{BE} for a general source on polynomials and Remez-type inequalities, and to 
\cite{E} for a survey. In our situation, we need a Remez-type inequality for planar domains, which
can be found in \cite{AR}. The second ingredient is to decompose the integration domain into
good and bad parts. Good meaning here that the Remez-type inequality applies. In Kovrijkine's
work, as well as in many of the succeeding work using his method, the separation in good and bad
parts was achieved {\it via} a Bernstein-type inequality. Such an inequality holds for instance in 
Paley-Wiener spaces and more generally in so-called model spaces. However, it is no longer true
in Fock or Bergman spaces. So a different approach has to be found to get the good intervals 
(rather disks in Fock or Bergman spaces).
The way of turning around Bernstein's inequality given in this paper is one of the new features and applies to many other situations, including the Paley-Wiener space 
itself (see the last section).
The third ingredient will be a translation allowing to translate the good parts to a reference
situation (the ``origin'') where we apply the Remez-type inequality. Such translations are
known to be well-behaved for instance in Fock spaces and Paley-Wiener spaces so that 
our construction
easily carries over to those situations. Since in those spaces the result is
already known (\cite{Ko} for the Paley-Wiener space and \cite{JS} for the Fock space), in this 
paper we focus on the case of the 
Bergman spaces.

As a matter of fact, the method proposed here can also be seen as a new way of proving 
sampling results in
the Bergman space (also e.g.\ in Fock or Paley-Wiener spaces). It essentially exploits locally the 
Remez-type inequalities, the proofs of which often 
involve heavy machinery. But once these inequalities established they prove being powerful
tools in sampling problems.

\section{Remez-type inequalities}

In this section we start recalling some results of the paper \cite{AR}.
Let $G$ be a (bounded) domain in $\CC$. Let $0<s<|\overline{G}|$ (Lebesgue measure of $\overline{G}$).
Denoting $\Pol_n$ the space of complex polynomials of degree at most $n\in\NN$, 
we introduce the set
\[
 P_n(\overline{G},s)=\{p\in \Pol_n:|\{z\in \overline{G}:|p(z)|\le 1\}|\ge s\}.
\]
Next, let
\[
 R_n(z,s)=\sup_{p\in P_n(\overline{G},s)}|p(z)|.
\]
This expression gives the biggest possible value at $z$ of a polynomial $p$ of degree at most $n$
and being at most $1$ on a set of measure at least $s$. In particular
Theorem 1 from \cite{AR} claims that for $z\in \partial G$, we have
\bea\label{AR}
 R_n(z,s)\le \left(\frac{c}{s}\right)^n.
\eea
This result corresponds to a generalization to the two-dimensional case of the 
Remez inequality which is usually
given in dimension 1.
In what follows we will essentially consider $G$ to be a disk or a rectangle. 
By the maximum modulus principle, the above constant gives an upper estimate on $G$ for 
an arbitrary polynomial of degree at most $n$ which is bounded by one on a set of
measure at least $s$. Obviously, if this set is small ($s$ close to $0$), i.e. $p$ is controlled by 1 on a 
small set, then the estimate has to get worse.
\\

\begin{remark}\label{rem1}
Let us make another observation. If $c$ is the constant in \eqref{AR} associated with 
the unit disk $G=\DD=D(0,1)$, then a simple argument based on homothecy shows that
the corresponding constant for an arbitrary disk $D(0,r)$ is $cr^2$ (considering $D(0,r)$ as underlying domain, the constant $c$ appearing in \cite[Theorem 1]{AR} satisfies $c>2\times m_2(D(0,r))$). So, in the sequel we will use the estimate
\bea
\label{AR1}
 R_n(z,s)\le \left(\frac{cr^2}{s}\right)^n,
\eea
where $c$ does not depend on $r$.
\end{remark}

Let us recall Lemma 1 from \cite{Ko}.
\begin{lem}[Kovrijkine]\label{LemKo}
Let $\phi$ be analytic in $D(0,5)$ and let $I$ be an interval of length 1 such that $0\in I$ and let
$E\subset I$ be a measurable set of positive measure. If $|\phi(0)|\ge 1$ and 
$M=\max_{|z|\le 4}|\phi(z)|$ then
\[
 \sup_{z\in I}|\phi(z)|\le \left( \frac{C}{|E|}\right)^{\frac{\ln M}{\ln 2}}\sup_{x\in E}|\phi(x)|.
\]
\end{lem}

We will discuss the following counterpart for the planar case:
\begin{lem}\label{Kovr-2D}
Let $0<r<\rho$ be fixed. There exists a constant $\eta>0$ such that the following holds. 
Let $\phi$ be analytic in $D(0,\rho)$, 
let 
$E\subset D(0,r)$ be a planar measurable set of positive measure, and let $z_0\in D(0,r)$. 
If $|\phi(z_0)|\ge 1$ and $M=\max_{|z|\le \rho}|\phi(z)|$ then
\[
 \sup_{z\in D(0,r)}|\phi(z)|\le \left( \frac{cr^2}{|E|}\right)^{\eta \ln M}\sup_{z\in E}|\phi(z)|,
\]
where $c$ does not depend on $r$, and
\[
 \eta \le c''\frac{\rho^4}{(\rho-r)^4}\ln\frac{\rho}{\rho-r}
\]
for an absolute constant $c''$.
\end{lem}

For the case of the Bergman space which is the main object in this paper we can consider
$(1-r)=\kappa (1-\rho)$ for some $\kappa>1$, and, of course, $\rho<1$, so that the above estimate becomes
\bea\label{EstimEta}
 \eta\le c''\frac{1}{(1-r)^4}\ln\frac{1}{1-r},
\eea
where $c''$ is another absolute constant.
\\

Kovrijkine's proof, based on Jensen's inequality and the Remez inequality in the 
one-dimensional case, carries almost verbatim over to the two-dimensional case.
Still, since our geo\-metrical setting is not exactly the same and we need to take into account
the location of $z_0$, and moreover we wish to keep track
of the underlying constants, we reproduce here the adapted version of the proof. Also, in our proof we make
use of the pseudohyperbolic metric which changes a bit the setting and which could
be of interest in other situations. For $a\in \DD$, define the usual disk
automorphism by
\bea\label{autom}
 \varphi_a(z)=\frac{a-z}{1-\overline{a}z},\quad z\in \DD.
\eea
It is well known that $\varphi_a(\varphi_a(z))=z$.

\begin{proof}
We first observe that by a rescaling argument we can assume $\rho=1$.
Let 
\[
 \psi(z)=\phi\circ\varphi_{z_0}(z)=\phi(\frac{z_0-z}{1-\overline{z}_0z}),
\] 
which translates $z_0$ to $0$. Also, letting $r_0=2r/(1+r^2)$ (which corresponds to a 
pseudohyperbolic doubling of $r$), we get $E\subset D_{phb}(z_0,r_0)$.
Define $\tilde{E}=\varphi_{z_0}(E)$ so that $\tilde{E}\subset D(0,r_0)$, and 
$\psi|\tilde{E}=\phi\circ\varphi_{z_0}|\tilde{E}
=\phi|E$.
From now on, we will consider $\psi$ on 
$D(0,r_0)$ which thus contains $\tilde{E}$, and $|\psi(0)|=|\phi(z_0)|\ge 1$.

\includegraphics{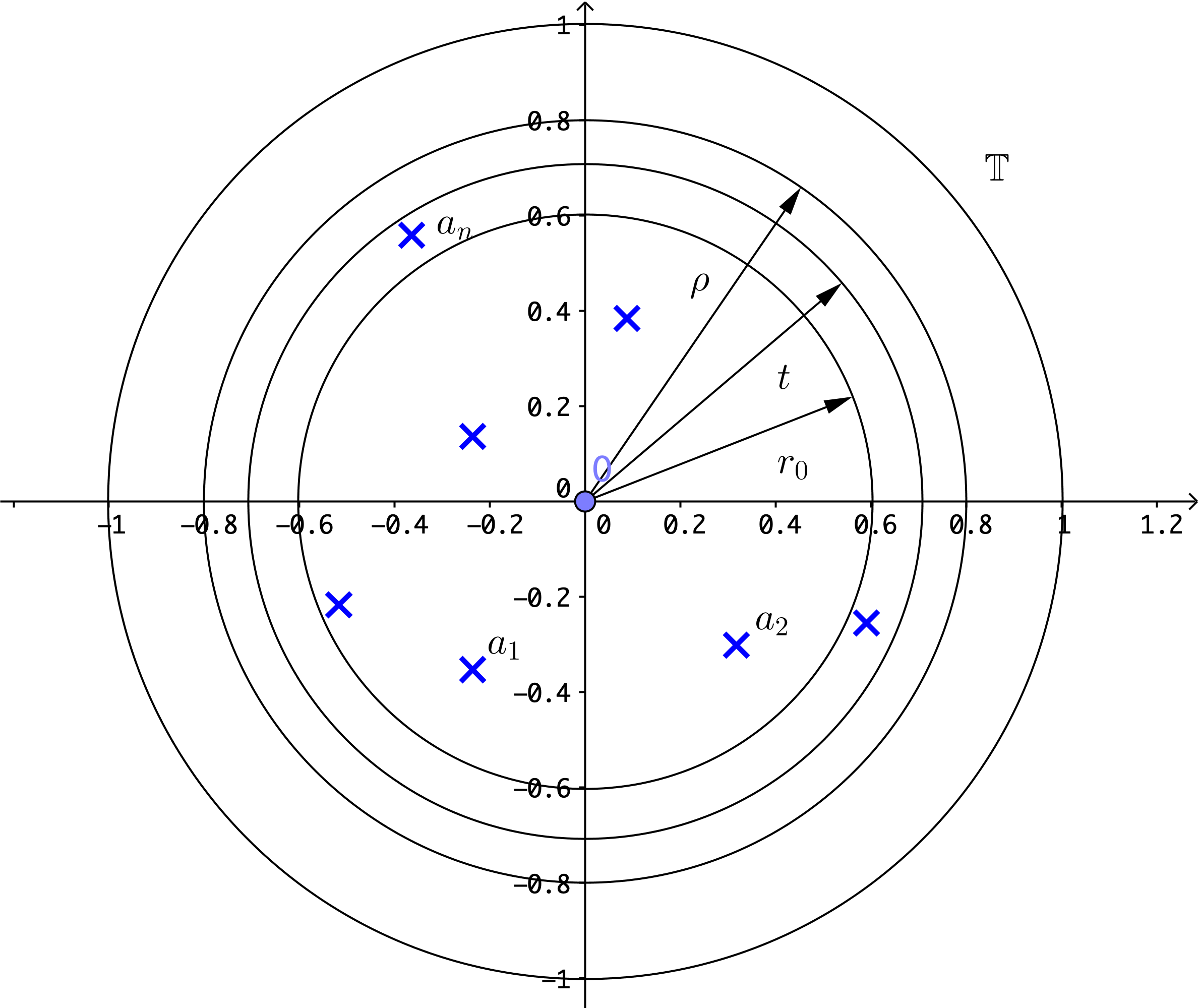}

We now let $t=2r_0/(1+r_0^2)$ another
pseudohyperbolic doubling, now of $r_0$. 
Consider $a_1,\ldots,a_n$, the zeros of $\psi$ in $D(0,t)$, and $B$ 
the Blaschke product associated with these zeros in $D(0,t)$. Then we can divide by 
$B=P/Q$ to obtain a zero-free function $g=\psi/B$ in $D(0,t)$. 
Here  $P=\prod_{k=1}^nt(a_k-z)$ and $Q=\prod_{k=1}^n (t^2-\overline{a}_kz)$ are
 polynomials of degree $n$. Since $|B|=1$ on $\partial D(0,t)$ and $|B|\le 1$ in $D(0,t)$, 
we have $|g(0)|\ge |\psi(0)|=|\phi(z_0)|\ge 1$ and $\max_{|z|\le t}|g(z)|\le M$. Applying Harnack's inequality to the 
positive harmonic function $u=\ln M-\ln |g|$ in $D(0,t)$ we get 
for every $z$ in the compact set $\overline{D(0,r_0)}\subset D(0,t)$ that
\[
 \frac{t-r_0}{t+r_0} u(0)\le  u(z)\le \frac{t+r_0}{t-r_0} u(0)\le \frac{t+r_0}{t-r_0} \ln M.
\] 
Set $\alpha=(t+r_0)/(t-r_0)$ to simplify notation.
We
deduce that for every $z\in \overline{D(0,r_0)}$, $|g(z)|\ge M^{1-\alpha}$. Hence
\[
  \frac{\max_{|z|\le r_0}|g(z)|}{\min_{|z|\le r_0}|g(z)|}\le M/M^{1-\alpha}=M^\alpha.
\]

Let us estimate $Q$:
\[
  \frac{\max_{|z|\le r_0}|Q(z)|}{\min_{|z|\le r_0}|Q(z)|}\le
 \frac{\max_{|z|\le r_0}\prod_{k=1}^n |t^2-\overline{a}_kz|}{\min_{|z|\le r_0}\prod_{k=1}^n |t^2-\overline{a}_kz|}
 \le \left(\frac{t+r_0}{t-r_0}\right)^n
\]
We now use the Andrievskii-Ruscheweyh estimate. Given a polynomial $P$ of degree at most
$n$. Let $m=\sup_{z\in E}|P(z)|$. Set $P_1=P/m$ and $s=|\tilde{E}|$. Then $P_1\in
P_n(\overline{D(0,r_0)},s)$. By \eqref{AR1} we get
\[
 |P_1(z)|\le \left(\frac{cr_0^2}{|\tilde{E}|}\right)^{n},
\] 
and hence
\[
 \sup_{|z|\le r_0}|P(z)|\le \left(\frac{cr_0^2}{|\tilde{E}|}\right)^{n}\sup_{z\in \tilde{E}}|P(z)|.
\]
The exact same estimates as Kovrijkine's lead to 
\beqa
 \sup_{|z|\le r_0}|\psi(z)|&\le&  M^{\alpha}\left(\frac{t+r_0}{t-r_0}\right)^n\times \left(\frac{cr_0^2}{|\tilde{E}|}\right)^{n}
 \times \sup_{z\in \tilde{E}}|\psi(z)|\\
&\le& M^{\frac{t+r_0}{t-r_0}}\left(\frac{t+r_0}{t-r_0}\right)^{n}\times \left(\frac{cr_0^2}{|\tilde{E}|}\right)^{n}
 \times \sup_{z\in \tilde{E}}|\psi(z)|\\
 &\le& e^{\frac{t+r_0}{t-r_0}\ln M+n\ln\frac{t+r_0}{t-r_0}}\left(\frac{cr_0^2}{|\tilde{E}|}\right)^n
 \sup_{z\in \tilde{E}}|\psi(z)|.
\eeqa
Since $\tilde{E}=\varphi_{z_0}(E)$ and $|z_0|<r_0$, we have
$|\tilde{E}|\ge c(1-r_0)^2|E|$, so that
\[
 \sup_{|z|\le r_0}|\psi(z)|\le  
 e^{\frac{t+r_0}{t-r_0}\ln M+n\ln\frac{t+r_0}{t-r_0}-2n\ln (1-r_0)}\left(\frac{cr_0^2}{|{E}|}\right)^n
 \sup_{z\in \tilde{E}}|\psi(z)|.
\]
Assuming $e\le cr_0^2/|{E}|$ (note that this inequality means that the density of ${E}$
is less than a fixed constant, if the density is bigger than some fixed constant, say $1/2$, 
then the estimate is not really of interest).
This yields
\[
 \sup_{|z|\le r_0}|\psi(z)|\le\left(\frac{cr_0^2}{|{E}|}\right)^{\frac{t+r_0}{t-r_0}\ln M+n\ln\frac{t+r_0}{t-r_0}-2n\ln(1-r_0)+n}\sup_{z\in \tilde{E}}|\psi(z)|
\]

Applying Jensen's formula in a similar way as did Kovrijkine we obtain
$n\le \frac{\ln M}{\ln (1/t)}$ (recall that $\rho$ was supposed to be 1). Setting
\[
 \eta=\frac{t+r_0}{t-r_0}+\left(\ln\frac{t+r_0}{t-r_0}-2\ln(1-r_0)+1\right)/\ln(1/t),
\]
and retranslating to $\phi$ we get
\[
 \sup_{|z|\le r}|\phi(z)|\le\sup_{|z|\le r_0}|\psi(z)|
\le\left(\frac{cr_0^2}{|{E}|}\right)^{\eta\ln M}\sup_{z\in \tilde{E}}|\psi(z)|
 \le \left(\frac{cr_0^2}{|{E}|}\right)^{\eta\ln M}\sup_{z\in {E}}|\phi(z)|
\]

Let us discuss the estimate for $\eta$. Note first that $t=2r_0/(1+r_0^2)$, so that 
\[
 \frac{t+r_0}{t-r_0}=\frac{3+r_0^2}{1-r_0^2}\le \frac{4}{1-r_0^2}.
\]
Also $\ln (1/t)\sim (1-r_0)^2/(2r_0)$, and since $r_0=2r/(1+r^2)$ we have
$1-r_0\sim (1-r) ^2/2$.
Hence
\beqa
 \eta&\lesssim&\frac{4}{1-r_0^2}+\left(\ln \frac{4}{1-r_0^2}+\ln \frac{1}{(1-r_0)^2}+1\right)
 \times \frac{2}{(1-r_0)^2}\le \frac{c}{(1-r_0)^2}\ln\frac{1}{1-r_0^2}\\
 &\sim& \frac{c'}{(1-r)^4}\ln\frac{1}{1-r}
\eeqa
where $c$ and $c'$ are absolute constants. 
%

Getting back to arbitrary $\rho>0$, the above becomes
\[
 \eta \le \frac{c''\rho^4}{(\rho-r)^4}\ln\frac{\rho}{\rho-r}
\]
\end{proof}

The corresponding case for $p$-norms is deduced exactly as in Kovrijkine's work.

\begin{coro}\label{CoroKov}
Let $0<r<\rho$ be fixed. There exists a constant $\eta>0$ such that following holds. 
Let $\phi$ be analytic in $D(0,\rho)$,
let 
$E\subset D(0,r)$ be a planar measurable set of positive measure, and let $z_0\in D(0,r)$. 
If $|\phi(z_0)|\ge 1$ and 
$M=\max_{|z|\le \rho}|\phi(z)|$ then
for $p\in [1,+\infty)$ we have
\[
 \|\phi\|_{L^p(D(0,r))}\le \left(\frac{cr^2}{|E|}\right)^{\eta\ln M+\frac{1}{p}}\|\phi\|_{L^p(E)}.
\]
\end{coro}

The estimates on $\eta$ are the same as in the lemma. The constant $c$ does not depend on $r$.


\section{Proof of Theorem \ref{thm1}}\label{proof}

In the proof we will use a change of variable formula. Recall the definition of the
disk-automorphism from \eqref{autom}.
Then we can introduce the change of variable formula
\[
 T_a:A^{p,\alpha}\lra A^{p,\alpha}, \quad (T_af)(z)=(f\circ \varphi)(z)\varphi_a'(z)^{(2+\alpha)/p}. 
\]
Recall that 
\[
 \varphi'_a(z)=-\frac{1-|a|^2}{(1-\overline{a}z)^2}.
\]
Clearly $T_a$ is an isomorphism of the Bergman space $A^{p,\alpha}$. We will also consider
restrictions of $T_a$ from pseudohyperbolic disks to other corresponding pseudohyperbolic
disks.
\\

We now enter into the main part of the proof which gives a simple way of avoiding
the Bernstein inequality. We need to introduce some preliminary notation. For 
$n\in\NN$ and $k=0,1,\ldots,2^n-1$, let $z_{n,k}=(1-2^{-n})e^{i2\pi k/2^n}$ and define
\[
 D_{n,k}^r=D_{phb}(z_{n,k},r),
\]
where $r\in (0,1)$.
It can be checked that for a sufficiently big choice $r_0$ of $r$ these disks cover $\DD$.
We will henceforth set $D_{n,k}=D_{n,k}^{r_0}$.

We need a finite covering property. Denote by $\chi_F$ 
the characteristic function of a measurable set $F$ in $\DD$.

\begin{lem}\label{lem1}
For every $r\in (r_0,1)$, there exists a constant $N$ such that
\[
 \sum_{n,k}\chi_{D^r_{n,k}}\le N.
\]
Moreover there is some universal constant such that 
\[
N\le c_{ov}\frac{1}{(1-r)^2}\ln\frac{1}{1-r},
\]
where $c_{ov}$ is an absolute constant.
\end{lem}

Obviously, the constant $N$ is at least equal to 1. In the estimates below $N$ will 
enter only logarithmically so that the power in $(1-r)$ as well as the logarithmic term
are not very important.

\begin{proof}[Proof of Lemma]
This is certainly a well know fact, but we include a proof for completeness and to give an idea of
the dependence of $N$ on $r$.
 
The result is equivalent to say that a disk $D_{phb}(z,r)$ contains at most $N$ points
$z_{n,k}$. 
Since the problem is rotation invariant, we can assume $z=x\in [0,1)$.

Recall from \cite[p.3]{G} that $D_{phb}(x,r)$ is a euclidean disk with diameter
\[
 [\alpha,\beta]=\left[\frac{x-r}{1-rx},\frac{x+r}{1+rx}\right]
\]

In terms of distance to the boundary, this disk, 
taking into account that $r\in (0,1)$ is fixed, is between
\beqa
 1-\frac{x-r}{1-rx}&=&\frac{1-rx-x+r}{1-rx}
 =\frac{(1+r)(1-x)}{1-rx}\le d_1(1-x),\\
 1-\frac{x+r}{1+rx}&=&\frac{1+rx-x-r}{1+rx}
 =\frac{(1-r)(1-x)}{1+rx}\ge d_2(1-x),
\eeqa
where $d_1$ and $d_2$ are strictly positive constants (e.g. $d_1=2/(1-r)$ and $d_2
=1/d_1=(1-r)/2$).
In particular for $z_{n,k}\in D_{phb}(x,r)$ 
it is necessary that $d_2(1-x)\le 2^{-n}\le d_1(1-x)$ which
may happen at most  $(\ln(d_2(1-x))^{-1}-\ln(d_1(1-x))^{-1})/\ln 2$ times.
With the above choice we can pick
$d_1/d_2\le 4/(1-r)^2$ so that the number of possible $n$ is bounded  
\bea\label{estimN1}
 \ln\frac{d_1}{d_2}/\ln 2\le \ln \frac{4}{(1-r)^2}/\ln 2. 
\eea

We also need to check that the number of $k$ such that $z_{n,k}\in D_{phb}(x,r)$ is uniformly
bounded. Note that $r$ is fixed. 
We will distinguish two cases, $x\le 2r$ et $2r<x<1$ (in case $r\ge 1/2$ the latter case does not occur).

When $x\le 2r$, we know that $D_{phb}(x,r)\subset D(0,3r/(1+2r^2))$. In order that $1-2^{-n}
\in D(0,3r/(1+2r^2))$ it is necessary that $1-2^{-n}\le 3r/(1+2r^2)$ which happens when
$n\lesssim \ln (1-r)^{-1}$ (the constant appearing here does not depend on $r$). In order to get
all $z_{n,k}\in D(0,3r/(1+2r^2))$, it remains to sum $\sum_{k=0}^{\ln (1-r)^{-1}}2^k
\simeq (1-r)^{-1}$. So for $x\le 2r$ we have $N\lesssim (1-r)^{-1}$.

Assume now that $2r<x<1$.
Then
\bea\label{Repart}
 \frac{x-r}{1-rx}\ge \frac{2r-r}{1}=r.
\eea
Observe that in order that $z_{n,k}\in D_{phb}(x,r)$
we need $1-|z_{n,k}|=2^{-n}\in [d_2(1-x),
d_1(1-x)]$. 
By \cite[p.3]{G}, $D_{Phb}(x,r)=D(c,R)$ with
\[
 c=x\frac{1-r^2}{1-r^2x^2},\quad R=r\frac{1-x^2}{1-r^2x^2}.
\]
Hence the argument of a point $w\in D_{phb}(x,r)$ can be estimated in the following way
\beqa
 |\arg w|&\le& \arcsin\frac{R}{c}=\arcsin\left(\frac{r}{x}\frac{1-x^2}{1-r^2}\right)
 \le \arcsin\frac{1-x^2}{2(1-r^2)}\simeq \frac{1-x^2}{2(1-r^2)}\\
 &\le& \frac{1-x}{1-r^2}\le \frac{1}{1-r^2}\frac{1}{d_2}2^{-n}\\
 &\le& \frac{2}{(1-r)^2}2^{-n}.
\eeqa
Hence, in order that $z_{n,k}=(1-2^{-n})e^{2\pi i k/2^n}\in D_{phb}(x,r)$ we need
\bea\label{estimN2}
 2\pi k \le  \frac{2}{(1-r)^2}.
\eea
We conclude that
the number of $k$ is also bounded by a fixed number. 
We are done.

The estimates  \eqref{estimN1} and \eqref{estimN2} yield the required bound on $N$.
\end{proof}

Fix $r_0<s<t<1$.
For $K>1$ the set
\[
 I_f^{K-good}=\{(n,k):\|f\|_{L^{p,\alpha}(D^{t}_{n,k})}\le K \|f\|_{L^{p,\alpha}(D^s_{n,k})}\}
\]
will be called the set of $K$-good disks for $(t,s)$
(in order to keep notation light we will not include
$s$ and $t$ as indices). This set depends on $f$.
\\

The key-result of this paper is the following proposition which allows to obtain
the set of good disks in a very simple way. This might have some independent interest.

\begin{prop}\label{prop1}
Let $r_0\le s<t<1$.
For every constant $c\in (0,1)$, there exists $K$ such that for every $f\in A^{p,\alpha}$ we have
\[
 \sum_{(n,k)\in I_f^{K-good}} \|f\|_{L^{p,\alpha}(D^s_{n,k})}^p\ge c \|f\|_{A^{p,\alpha}}^p.
\]
\end{prop}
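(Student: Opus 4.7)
The plan is a clean good-vs-bad dichotomy that uses only the two complementary features of the $D^r_{n,k}$ family recorded in and around Lemma \ref{lem1}: the disks $D^s_{n,k}$ cover $\DD$ (which holds since $s \ge r_0$ and this is already true at level $r_0$), while at the larger radius $t$ the family $\{D^t_{n,k}\}$ has finite overlap multiplicity, say $N_t$, by Lemma \ref{lem1} applied with $r = t$. Together these give the two bracketing inequalities
\[
 \|f\|_{A^{p,\alpha}}^p \le \sum_{n,k} \|f\|_{L^{p,\alpha}(D^s_{n,k})}^p, \qquad \sum_{n,k} \|f\|_{L^{p,\alpha}(D^t_{n,k})}^p \le N_t \|f\|_{A^{p,\alpha}}^p.
\]

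Next I would split the lower-bound sum according to whether $(n,k) \in I_f^{K-good}$. By the very definition of the $K$-bad set, each bad index satisfies $\|f\|_{L^{p,\alpha}(D^s_{n,k})}^p < K^{-p} \|f\|_{L^{p,\alpha}(D^t_{n,k})}^p$; summing over bad indices and plugging in the upper bound above yields
\[
 \sum_{(n,k) \notin I_f^{K-good}} \|f\|_{L^{p,\alpha}(D^s_{n,k})}^p \le K^{-p} N_t \|f\|_{A^{p,\alpha}}^p.
\]
Subtracting this from the total, which is itself at least $\|f\|_{A^{p,\alpha}}^p$, gives
\[
 \sum_{(n,k) \in I_f^{K-good}} \|f\|_{L^{p,\alpha}(D^s_{n,k})}^p \ge \bigl(1 - K^{-p} N_t\bigr) \|f\|_{A^{p,\alpha}}^p,
\]
so the choice $K = \bigl(N_t/(1-c)\bigr)^{1/p}$ closes the proof.

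I do not expect any real obstacle: the statement is a pigeonhole principle and the argument is entirely soft. The only care needed is the asymmetric use of the two radii $s < t$ in Lemma \ref{lem1} — covering is applied at the smaller radius $s$ to lower-bound $\|f\|_{A^{p,\alpha}}^p$ by a sum of local $s$-norms, while bounded overlap is applied at the larger radius $t$ to upper-bound the sum of local $t$-norms by a multiple of the global norm. This asymmetry is precisely what makes the $K$-bad set quantitatively negligible once $K$ is large enough, and it also explains why the dependence of $K$ on the parameters enters only through $N_t$, $c$, and $p$ (via the exponent $1/p$).
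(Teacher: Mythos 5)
Your proof is correct and is essentially identical to the paper's own argument: covering of $\DD$ by the $D^s_{n,k}$ to lower-bound $\|f\|^p_{A^{p,\alpha}}$, bounded overlap of the $D^t_{n,k}$ to upper-bound $\sum_{n,k}\|f\|^p_{L^{p,\alpha}(D^t_{n,k})}$, and the pigeonhole splitting into $K$-good and $K$-bad indices with $K^p = N_t/(1-c)$. (As a small aside, your reading is the right one: the overlap constant that actually enters is the one at radius $t$, as used in the proof, even though the remark just before the proposition in the paper misstates it as the radius-$s$ constant.)
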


It will be clear from the proof that one can pick $K\ge N/(1-c)$ where $N$ corresponds 
to the overlapping constant from Lemma \ref{lem1} for the (pseudohyperbolic) radius $s$.

\begin{proof}[Proof of proposition]
Since $r_0\le s<t<1$, we have $\bigcup_{n,k}D^s_{n,k}=\DD$, and 
by Lemma \ref{lem1} we have a finite overlap property:
\[
 \sum_{n,k}\chi_{D^{t}_{n,k}}\le N.
\]
Hence
\[
 \|f\|_{A^{p,\alpha}}^p\le \sum_{(n,k)}\|f\|^p_{L^{p,\alpha}(D^s_{n,k})}
 \le \sum_{(n,k)}\|f\|^p_{L^{p,\alpha}(D^{t}_{n,k})}
 \le N\|f\|_{A^{p,\alpha}}^p.
\]
Now pick a $c\in (0,1)$.
Then 
\beqa
 N\|f\|_{A^{p,\alpha}}^p
 &\ge& \sum_{(n,k)\notin I_f^{K-good}}\|f\|^p_{L^{p,\alpha}(D_{n,k}^{t})}
 >K^p \sum_{(n,k)\notin I_f^{K-good}}\|f\|^p_{L^{p,\alpha}(D^s_{n,k})}\\
 &=&K^p \sum_{(n,k)}\|f\|^p_{L^{p,\alpha}(D^s_{n,k})} - K^p \sum_{(n,k)\in I_f^{K-good}} 
  \|f\|^p_{L^{p,\alpha}(D^s_{n,k})}\\
 &\ge&K^p\|f\|_{A^{p,\alpha}}^p-K^p\sum_{(n,k)\in I_f^{K-good}} 
  \|f\|^p_{L^{p,\alpha}(D^s_{n,k})}.
\eeqa
For $K^p>N$ we thus get
\[
 \sum_{(n,k)\in I_f^{K-good}} 
  \|f\|^p_{L^{p,\alpha}(D^s_{n,k})}\ge 
 \frac{K^p-N}{K^p}\|f\|_{A^{p,\alpha}}^p
\]
Hence, setting $c=(K^p-N)/K^p\in (0,1)$ we see that $K^p=N/(1-c)$ is suitable.
\end{proof}

We are now in a position to prove the theorem.

\begin{proof}[Proof of Theorem \ref{thm1}]
%

Let $r_1=\max(r,r_0)$, where $r_0$ ensures the covering of $\DD$ by $D_{n,k}$.
It is not hard to see that there exists a universal constant $\eta>0$ 
such that if $E$ is $(\gamma,r)$-dense then it is 
$(\eta\gamma,r_1)$-dense (if $r\ge r_0$ then $\eta=1$, if $r<r_0$ we  can cover
a least portion of any disk $D_{phb}(z,r_0)$ by disjoint disks with radius $r$). 
Since the multiplicative constant $\eta$ does not change 
the estimate claimed in the theorem,  and in order to not overload notation,
in all what follows we will use $\gamma$  instead of $\eta\gamma$.

Now fix $c\in (0,1)$. We will choose $r_1$ for $s$ in Proposition \ref{prop1} and $s<t<1$ to
be fixed later. Let also $K$ be a corresponding choice from Proposition \ref{prop1} (which 
thus depends on $t$ since $N\le c \ln(1/(1-t))/(1-t)^2)$).

Pick $f\in A^{p,\alpha}$. In particular we can assume $\|f\|_{A^{p,\alpha}}=1$.
Let $I_f^{K-good}$ be the set of $K$-good disks for $(r_1,t)$.
Then by Proposition \ref{prop1}
\[
 \|f\|_{A^{p,\alpha}}^p\le \frac{1}{c}\sum_{(n,k)\in I_f^{K-good}}\|f\|_{L^{p,\alpha}(D^{r_1}_{n,k})}^p.
\]
In all what follows we suppose $(n,k)\in I_f^{K-good}$. 
Recall that $z_{n,k}$ is the center of
$D_{n,k}^{r_1}$. 
Clearly $D(0,r_1)=\varphi_{z_{n,k}}(D_{n,k}^{r_1})$ which allows to translate the 
situation on $D_{n,k}^{r_1}$ to a euclidean disk at the origine
(since pseudohyperbolic and euclidean disks
are the same when centered at 0, we will not use the index ``phb" in this situation). Then
the change of variable operator $T_{z_{n,k}}$ restricted to $D_{n,k}^{r_1}$ gives
\[
 \|f\|_{L^{p,\alpha}(D^{r_1}_{n,k})}^p=\int_{D_{n,k}^{r_1}}|f(z)|^pdA_{\alpha}(z)
 =\int_{D(0,r_1)}|(f\circ\varphi_{z_{n,k}})(u) (\varphi'_{z_{n,k}}(u))^{(2+\alpha)/p}|^p dA_{\alpha}(u).
\]
Denote $g=T_{z_{n,k}}f$ and $D_0=D(0,r_1)$. 
Clearly $\int_{D_0}|g|^pdA>0$ so that
we can set 
\[
 h=g\times \left(\frac{\pi r_1^2}{\int_{D_0}|g|^pdA}\right)^{1/p}. 
\]
In particular
 $\int_{D_0}|h|^pdA=\Area(D_0)$, which implies
that there is $z_0\in D_0$ with $|h(z_0)|\ge 1$. 
In order to use Corollary \ref{CoroKov} we have to estimate $h$ on a bigger disk.
Let $\rho=\frac{2r_1}{1+r_1^2}$ be a pseudohyperbolic doubling of the radius $r_1$,
and $t=\frac{2\rho}{1+\rho^2}$ the pseudohyperbolic doubling of $\rho$.
Consider now $h$ 
as a function in the restricted Bergman space $A^{p,\alpha}|D(0,t)$,
we have for $z\in D(0,\rho)$,
\[
 |h(z)|^p\le \frac{C}{(1-\rho)^{(2+\alpha)}}\int_{D(0,t)}
 |h(w)|^p (1-|w|^2)^{\alpha}dA(w),
\]
where $C$ is an absolute constant (the proof is done as in the Bergman space on $\DD$ 
and essentially based on subharmonicity of $|g|^p$). Since $h$ is a constant multiple of
$g$ we can replace in the above $h$ by $g$.
Again, $\rho$ is an iterrated doubling of $r_1$ so that
$1-\rho\simeq (1-r_1)^2$.

Observe that
\[
 \int_{D(0,t)}
 |g(w)|^p dA_{\alpha}(w)
 =\int_{D_{n,k}^t}|f(z)|^p dA_{\alpha}(z),
\]
and
\[
 \int_{D_0}|g|^pdA(z)\ge \int_{D_0}|g|^pdA_{\alpha}(z)=\int_{D_{n,k}^{r_1}}|f|^pdA_{\alpha}(z).
\]
Now, putting all the pieces together, 
\beqa
\lefteqn{
 \sup_{z\in D(0,\rho)}|h(z)| 
=\left( \frac{\pi r_1^2}{\int_{D_0}|g|^pdA(z)}\right)^{1/p}\sup_{z\in D(0,\rho)}|g(z)|}\\
&&\le\left(\frac{\pi}{\int_{D_{n,k}^{r_1}}|f|^pdA_{\alpha}(z)}\right)^{1/p}
\times \frac{C^{1/p}}{(1-\rho)^{(2+\alpha)/p}}
 \left(\int_{D_{n,k}^t}|f(z)|^p dA_{\alpha}(z)\right)^{1/p}.
\eeqa
Since $(n,k)\in I_f^{K-good}$ for $(r_1,t)$ we get
\bea\label{estimM0}
  M=\sup_{z\in D(0,\rho)}|h(z)| \le \frac{DK}{(1-r_1^2)^{2\times(2+\alpha)/p}},
\eea
where $D$ is some universal constant, and 
\[
 K^p=\frac{N}{1-c}\le\frac{c_{ov}}{1-c}
 \frac{1}{(1-t)^2}\ln\frac{1}{1-t}\simeq \frac{1}{(1-r_1)^2}\ln\frac{1}{1-r_1}.
\]
In what follows it will be enough to use the estimate
\bea\label{estimK}
 K\lesssim \frac{1}{(1-r_1)^{3/p}}.
\eea
%

We can now apply
Corollary \ref{CoroKov} to $h$ and $\tilde{E}=\varphi_{z_0}(E\cap D_{n,k})$. Observe that
the density of $\tilde{E}$ in $D_0$ is the same as that of $E$ in $D_{n,k}$ which is thus bounded
below by $\gamma$.
Since we are in a disk of fixed radius we can plug in the weight $(1-|z|^2)^{\alpha}\le 1$,
to the prize of an 
additional constant in the last line:
\beqa
 \int_{z\in D(0,r_1)}|h(z)|^pdA_{\alpha}(z)&\le&
 \int_{z\in D(0,r_1)}|h(z)|^pdA(z)\\
 &\le& \left(\frac{cr_1^2}{|\tilde{E}|}\right)^{p\eta\ln M+1}
 \int_{\tilde{E}} |h(z)|^pdA(z).\\
 &\le&  \left(\frac{c}{\gamma}\right)^{p\eta\ln M+1}\frac{1}{(1-r_2^2)^{\alpha}}
 \int_{\tilde{E}} |h(z)|^pdA_{\alpha}(z).
\eeqa
%
By homogenity we can replace in the above the function $h$ by $g$. 
Thus, changing back variables, we get for every $K$-good disk,
\[
 \int_{D^{r_1}_{n,k}}|f|^pdA_{\alpha}\le
\left(\frac{c}{\gamma}\right)^{p\eta\ln M+1}
 \frac{1}{(1-r_2^2)^{\alpha}}\int_{E\cap D^{r_1}_{n,k}}|f|^pdA_{\alpha},
\]
Since integration over $K$-good disks allows to recover the norm we get the
desired result. 
\end{proof}

A word on the exponent appearing in the estimate.
Recall from \eqref{EstimEta} that
\[
 \eta\le c''\frac{1}{(1-r_1)^4}\ln\frac{1}{1-r_1},
\]

Also from \eqref{estimM0}, \eqref{estimK} and with $\delta=(1-r_1^2)^{(2+\alpha)/p}$,
\[
 \ln M \lesssim \frac{7+2\alpha}{p}\ln\frac{1}{1-r_1^2}
\]
(the constants involved here do not depend on $r_1$)
so that with $7+2\alpha\lesssim 1+\alpha$, we get
\[
 \eta\ln M\lesssim \frac{1+\alpha}{p}\times \frac{1}{(1-r_1)^4} \ln\frac{1}{1-r_1^2}.
\]

\section{Comments on other spaces}

\subsection{Fock spaces}

In Fock spaces (see \cite{Zhu}), 
the situation follows as a special case of the recent work by \cite{JS}. Still,
since our method is quite elementary and universal, we feel interesting to present this 
application of the above techniques.
Let us give the necessary indications on how to obtain the estimates in the
Fock space defined by
\[
 \mathcal{F}^{p,\alpha}=\{f\in \Hol(\CC): \|f\|_{\mathcal{F}^{p,\alpha}}^p
 =\int_{\CC}|f(z)|^pe^{-p\alpha|z|^2/2}dA(z)<+\infty\}.
\]
Les us also use $dA_{\alpha,p}(z)=e^{-p\alpha |z|^2/2}dA(z)$.
In the Fock space the computations are easier since we do
not need to work with pseudohyperbolic distance but with euclidean distance. 
In this situation pick $z_{n,k}=n+ik$, $n,k\in\ZZ$. Replace the pseudohyperbolic
disks by euclidean ones $D_{n,k}^r=D(z_{n,k},r)$ where now $r>\sqrt{2}$ will ensure the 
covering property. For every fixed $r>\sqrt{2}$ it is clear that the finite overlap property holds
(we are in euclidean geometry). One easily sees that $N\simeq r^2$. 
Also there is an isometric translation (change of variable)
operator $T_af(z)=e^{\alpha\overline{a}z-\alpha |a|^2/2}f(z-a)$ 
which will allow to translate the situation
from an arbitrary disk $D_{n,k}^r$ to $D(0,r)$.

As in the Bergman space, we introduce good disks. Fix $\sqrt{2}<s<t$.
For $K>1$ and a function $f\in \mathcal{F}^{p,\alpha}$ the set
\[
 I_f^{K-good}=\{(n,k):\|f\|_{L^{p,\alpha}(D^{t}_{n,k})}\le K \|f\|_{L^{p,\alpha}(D^s_{n,k})}\}
\]
will be called the set of $K$-good disks for $(t,s)$, where we integrate with respect to
the measure $dA_{\alpha,p}$. In the argument below we pick $s=r$ and $t=4r$.


Now, given $f$ with $\|f\|_{\mathcal{F}^{p,\alpha}}=1$, let,
as in the Bergman space, $g=T_{-z_{n,k}}f$ for $(n,k)\in I_f^{K-good}$, and set
\[
 h=c_0g, \quad c_0=\left(\frac{\pi r^2}{\int_{D(0,r)}|g|^pdA(z)}\right)^{1/p}.
\]
Again there is $z_0\in D(0,r)$ with $|h(z_0)|\ge 1$. 

Set $\rho=2r$. We have to estimate the maximum modulus of $h$ on $D(0,\rho)$ in terms
of a local integral of $h$. To that purpose, we can assume $h\in  A^p(D(0,4r))$ 
which justifies the first of the estimates below.
Hence
\beqa
 \max_{z\in D(0,\rho)}|h(z)|^p&\le& \frac{C}{r^2} \int_{D(0,4r)}|h|^pdA(z)\\
 &=& \frac{C}{r^2}\times\frac{\pi r^2}{\int_{D(0,r)}|g|^pdA(z)} \int_{D(0,4r)}|g|^pdA(z)\\
 &\le& \frac{Ce^{8\alpha p r^2}}{r^2} \frac{\pi r^2}{\int_{D(0,r)}|g|^pdA_{\alpha,p}(z)} 
 \int_{D(0,4r)}|g|^pdA_{\alpha,p}(z)
\eeqa
(in the last estimate, in order to switch from $dA$ to $dA_{\alpha,p}$ we had to introduce an
additional factor $e^{\alpha p (4r)^2/2}$).
 Since $(n,k)$ is $K$-good for $(r,4r)$, we see that
the last expression in the above inequalities is bounded by a constant (depending on $r$)
times $K^p$: $M^p =\max_{z\in D(0,\rho)}|h(z)|^p\le c_rK^p$, where
$c_r=C\pi e^{8\alpha p r^2}$. Exactly as in Proposition \ref{prop1} we see that
$K^p\simeq N\lesssim r^2$, so that 
\bea\label{estimM}
 M\lesssim r^{2/p}e^{8\alpha r^2}.
\eea

Hence, setting $\tilde{E}=(E\cap D^r_{n,k})-z_{n,k}$ 
we get using Corollary \ref{CoroKov} applied to $h$:
\[
\int_{D(0,r)}|h(z)|^pdA(z) \\
 \le\left(\frac{cr^2}{|\tilde{E}|}\right)^{p\eta\ln (M/\delta)+1}
 \int_{\tilde{E}}|h(z)|^pdA(z)\\
\]
The factor $r^2$ appearing inside the brackets is a rescaling factor (see Remark \ref{rem1}).
Again, by homogenity we can replace in the above inequality $h$ by $g$.
Note also that $\pi r^2/|\tilde{E}|$ is controlled by $1/\gamma$.
This yields

\beqa
 \int_{D^{r}_{n,k}}|f|^p
 dA_{\alpha, p}(z)&=&\int_{D(0,r)}|g(z)|^p 
 dA_{\alpha, p}(z)
\le \int_{D(0,r)}|g(z)|^pdA\\ 
&\le&\left(\frac{cr^2}{|\tilde{E}|}\right)^{p\eta\ln M+1}
 \int_{\tilde{E}}|g(z)|^pdA(z)\\
&\le&  e^{2\alpha pr^2} 
\left(\frac{c_1}{\gamma}\right)^{p\eta\ln M+1}\int_{\tilde{E}}|g(z)|^pe^{-p\alpha|z|^2/2}dA(z)\\
&=&e^{2\alpha pr^2} \left(\frac{c_1}{\gamma}\right)^{p\eta\ln M+1}
 \int_{E\cap D^r_{n,k}}|f(z)|^p dA_{\alpha,p}, 
\eeqa
where $c_1$ is an absolute constant.

Summing over all $K$-good $(n,k)$ we obtain the required result
\[
  \|f\|_{\mathcal{F}^{p,\alpha}}\lesssim e^{2\alpha r^2}
 \left(\frac{c_1}{\gamma}\right)^{\eta\ln M+1/p}\|f\|_{L^{p}(E,dA_{\alpha,p})}
\]
where 
\[
 \ln M\lesssim 8\alpha r^2\ln r+\frac{2}{p}\ln r,
\] 
and, in view of \eqref{EstimEta},
\[
\eta =c''\times 2^4\ln 2.
\]

\begin{remark}
In the above reasoning we had fixed the centers to be $z_{n,k}=n+ik$ which makes them independent 
of $r$. Still, one could replace these by $\tilde{z}_{n,k}=rn+irk$ (and consider the disks with these $\tilde{z}_{n,k}$ and radius $(\sqrt{2}+\epsilon)r)$), which will give a better control on the overlapping constant $N$ (actually $N\le 4$ in this case). Still replacing the radii $2r$ and $4r$ by slightly bigger or smaller ones will easily kill the term $r^2$ in the estimate of $M$ \eqref{estimM}
\end{remark}

\subsection{Paley-Wiener spaces}
The above arguments apply also to the Paley-Wiener spaces, in particular to find the good
intervals. We do not claim that our proof is better than Kovrijkine's nor that we get better
constants (also, Kovrijkine's key lemma is still used). We would just like to point out that also in this case Bernstein's inequality is not required 
to run the proof.
Let us recall the definition of Paley-Wiener spaces:
\[
 PW^p_b=\{f\in \Hol(\CC):f\text{ is of exponential type at most }b,\
 \|f\|_{PW_b^p}^p=\frac{1}{\pi}\int_{\RR}|f(x)|^pdx<\infty\}
\] 
where $b>0$ is the bandwith (often chosen to be $\pi$).

We will now use Proposition \ref{prop1} to find the good intervals. 
Let us give some indications for this case.
We first observe that an equivalent norm in the Paley-Wiener space can be given by integrating
over a strip: $S^h=\{z=x+iy\in\CC:x\in\RR,|y|<h\}$ for some fixed $h>0$:
\[
 \|f\|_{PW^p_b}^p\simeq \int_{S^h}|f(z)|^pdA(z).
\]
Here the constants depend on $b$, $p$ and $h$ (in what follows we will choose $h=10$).
This can be seen from the Plancherel-Polya estimate (see \cite[p.96]{S}), or checking that $\chi_SdA$, where
$\chi_S$ is the characteristic function of the strip $S$, is a Carleson and reverse Carleson 
measure for $PW_b^p$ (see \cite{HJK} and \cite{FHR}). The Plancherel-Polya estimate is
certainly a more elementary tool here. Clearly for every $h_1$ and $h_2$ we have
$\|f\|_{L^p(S^{h_1})}\simeq \|f\|_{L^p(S^{h_2})}$ for $f\in PW_b^p$, and the constants in play
only depend on $h_1$ and $h_2$.

Relative $(\gamma,r)$-density of a measurable set $E\subset \RR$ (Lebesgue measure on $\RR$)  
here means that 
\[
 \frac{|I\cap E|}{|I|}\ge\gamma
\]
for every interval $I$ of length $r$. We can cover the substrip of $S^r\subset S^{10r}$ 
by rectangles $R_{n,r}$
with length $r$ and height $2r$. 
For this, pick $R_{n,r}=[z_n-r/2,z_n+r/2]\times 
[-r,r] $ and $z_n=rn$, $n\in\ZZ$ (this will cover $S^r$ up to a set with zero planar 
Lebesgue measure).
The following adaption of Kovrijkine's statement will be useful. It is proved exactly as 
\cite[Lemma 1]{Ko}.
\begin{lem}[Kovrijkine-bis]\label{LemKoBis}
Fix $r>0$.
Let $\phi$ be analytic in $D(0,10r)$ and let $I=[-\D\frac{r}{2},\D\frac{r}{2}]$. Let
$z_0\in D(0,r)$, and
$E\subset I$ a measurable set of positive measure. If $|\phi(z_0)|\ge 1$ and 
$M=\max_{|z|\le 9r}|\phi(z)|$ then
\[
 \sup_{z\in I}|\phi(z)|\le \left( \frac{C}{|E|}\right)^{\frac{\ln M}{\ln 2}}\sup_{x\in E}|\phi(x)|.
\]
\end{lem}

Again, there is also an $L^p$-version of this result:
\begin{coro}
Under the conditions of the lemma, and let $1\le p<+\infty$, we have
\[
 \|\phi|_{L^p(I)} \le \left( \frac{C}{|E|}\right)^{\frac{\ln M}{\ln 2}+\frac{1}{p}}\|\phi\|_{L^p(E)}.
\]
\end{coro}

Now, as in Proposition \ref{prop1}
we find a set of $K$-good rectangles satisfying now $\int_{R_{n,10r}}|f|^pdA\le K
\int_{R_{n,r}}|f|^pdA$ (the arguments
are exactly the same, but one has to take care of the fact that we use two different norms
associated with
$S^{h_1}$ and $S^{h_2}$). Note that we could have chosen disks, but rectangles, centered
on a given discrete set (here $r\ZZ$), are more adapted for the covering of the strip. 
On these rectangles we use the same kind of estimates
as before to control the maximum on $R_{n,r}$ in a uniform manner depending only
on the local norm.
More precisely, given $f\in PW_b^p$ and a $K$-good rectangle $R_{n,r}$, set
\[
 h=\left(\frac{\pi r^2}{\int_{D({z_n},r)}|f|^pdA}\right)^{1/p}f.
\]
Then there is $z_0\in D(z_n,r)$ with $|h(z_0)|\ge 1$, and as above we can estimate
(considering $f$ as a function on the Bergman space $D(z_n,10r)$)
\[
 \max_{D(z_n,9r)} |h(z)|^p\le c\int_{D(z_n,10r)}|h|^pdA
 =c\pi r^2 \int_{D(z_n,10r)}|f|^pdA/\int_{D(z_n,r)}|f|^pdA\le cK\pi r^2.
\]

From here on, the rest follows as in Kovrijkine's argument using Lemma \ref{LemKoBis}
and its version for $L^p$.

\subsection{Dirichlet spaces}

It is not completely clear how to define dominating sets for Dirichlet spaces. Recall that (weighted) Dirichlet spaces, or more general Besov spaces can be defined by 
\[
 \mathcal{B}^p_{\alpha}=\{f\in \Hol(\DD):\|f\|^p_{\mathcal{B}^p_{\alpha}}=|f(0)|^p+\int_{\DD}|f'(z)|^p(1-|z|^2)^{\alpha}dm<\infty\}.
\]
Defining a positive, finite measure $\mu$ on $\DD$ as reverse Carleson measure by asking 
$\int_{\DD}|f|^pd\mu\ge C\|f\|_{\mathcal{B}^p_{\alpha}}^p$, thus generalizing \eqref{dom}
to arbitrary positive measures, leads to an 
empty result in general. Indeed, at least for $p=2$ and $\alpha=0$ it was mentioned after 
\cite[Theorem 8.3]{FHR} that such measures do simply not exist. If, instead, one 
defines dominating sets as measurable sets $E\subset \DD$ containing $0$ such that
\[
 \|f\|^p_{\mathcal{B}^p_{\alpha}}=|f(0)|^p+\int_{\DD}|f'(z)|^p(1-|z|^2)^{\alpha}dm
 \ge c |f(0)|^p+\int_{E}|f'(z)|^p(1-|z|^2)^{\alpha}dm
\]
then the key observation is that $f\in 
\mathcal{B}^p_{\alpha}$ if and only if $f'\in A^{p,\alpha}$ and use the results found in the 
Bergman space to get the same sampling constant estimates
(which will thus give the constant when $0\in E$; otherwise one could add $0$ or an arbitrarily
small neighborhood of 0 which does not change the density).


\begin{thebibliography}{99}

\bibitem{AR}
V.V. Andrievskii \& S. Ruscheweyh,
{\it Remez-type inequalities in the complex plane},
Constr. Approx. \textbf{25} (2007), no. 2, 221-237.

\bibitem{BE}
P. Borwein \& T. Erdelyi, 
{\it Polynomials and polynomial inequalities.} 
Graduate Texts in Mathematics, 161. Springer-Verlag, New York, 1995. x+480 pp.

\bibitem{E}
T. Erd\'elyi, {\it Remez-type inequalities and their applications},
J. Comput. Appl. Math. \textbf{47} (1993), no. 2, 167-209. 

\bibitem{FHR}
E. Fricain, A. Hartmann, W.T. Ross, {\it A survey on reverse Carleson measures},
Harmonic Analysis, operator theory, function theory, and applications, Jun 2015, Bordeaux, France. pp.91-123

\bibitem{G}
J. Garnett, {\it Bounded analytic functions}. Revised first edition. Graduate Texts in Mathematics, 236. Springer, New York, 2007. xiv+459 pp.

\bibitem{HJK}
A. Hartmann, Ph. Jaming, \& K. Kellay,
{\it Quantitative estimates of sampling constants}, accepted for publication in
Amer. J. Math, arXiv: 1707.07880.

\bibitem{HKZ}
H. Hedenmalm, B. Korenblum,\& K. Zhu, {\it Theory of Bergman spaces}. 
Graduate Texts in Mathematics, 199. Springer-Verlag, New York, 2000. x+286 pp.

\bibitem{JS}
Ph. Jaming \& M. Speckbacher,
{\it Planar sampling sets of the short-time Fourier transform}, preprint,
arXiv: 1906.02964.

\bibitem{Ko}
O. Kovrijkine
{\it Some results related to the Logvinenko-Sereda theorem}. 
Proc. Amer. Math. Soc. \textbf{129} (2001), no. 10, 3037-3047.


\bibitem{LS}
V. N. Logvinenko \& Yu. F. Sereda,
{\it Equivalent norms in spaces of entire functions of exponential type}. 
Teor. Funktsii, Funktsional. Anal. i Prilozhen \textbf{19} (1973), 234-246.

\bibitem{Lu1}
D. H. Luecking, 
{Inequalities on Bergman spaces}. Illinois J. Math. 25 (1981), no. 1, 1-11.

\bibitem{Lu2}
D. H. Luecking,
{\it Forward and reverse Carleson inequalities for functions in Bergman spaces and their 
derivatives}. Amer. J. Math. 107 (1985), no. 1, 85-111.

\bibitem{Lu3}
D. H. Luecking, 
{\it Dominating measures for spaces of analytic functions}. Illinois J. Math. 32 (1988), no. 1, 23-39.


\bibitem{OC}
J. Ortega-Cerd\`a,
{\it Sampling measures}. Publ. Mat. \textbf{42} (1998), no. 2, 559-566.

\bibitem{Pa1}
B. P. Panejah,
{\it On some problems in harmonic analysis}. 
Dokl. Akad. Nauk SSSR, 142 (1962), 1026-1029.

\bibitem{Pa2}
B. P. Panejah,
{\it Some inequalities for functions of exponential type and a priori estimates for general differential operators}. Russian Math. Surveys \textbf{21} (1966), 75-114.

\bibitem{Po}
A. Poltoratski,
{\it Toeplitz methods in completeness and spectral problems}, Proc. Int. Cong. of Math. 2018,
Rio de Janeiro, Vol.2, 1739-1774.

\bibitem{Ru}
Walter Rudin, {Real and complex analysis}. Third edition.
McGraw-Hill Book Co., New York, 1987. xiv+416 pp.

\bibitem{S}
K. Seip,
{\it Interpolation and sampling in spaces of analytic functions}. University Lecture Series, 33. American Mathematical Society, Providence, RI, 2004. xii+139 pp

\bibitem{Zhu}
K. Zhu, {\it Analysis on Fock spaces}, Graduate Texts in Mathematics, 263. Springer, 
New York, 2012. x+344 pp.
\end{thebibliography}
\end{document}